\newtheorem*{proposition}{Proposition}
\newcommand{\imat}{\mathbf{I}}
\newcommand{\abs}[1]{\lvert#1\rvert}
\newcommand{\Hom}{\mathrm{hom}}
\DeclareMathOperator{\PGL}{PGL}
\DeclareMathOperator{\GL}{GL}
\DeclareMathOperator{\PHG}{PHG}
\DeclareMathOperator{\PG}{PG}
\DeclareMathOperator{\GR}{GR}
\DeclareMathOperator{\rad}{rad}
\newcommand{\mset}[1]{\mathfrak{#1}}
\newcommand{\nmax}{\mathrm{m}}
\newcommand{\Z}{\mathbb{Z}}
\newcommand{\F}{\mathbb{F}}
\newcommand{\G}{\mathbb{G}}
\renewcommand{\H}{\mathbb{H}}
\newcommand{\I}{\mathbb{I}}
\newcommand{\J}{\mathbb{J}}
\newcommand{\K}{\mathbb{K}}
\renewcommand{\S}{\mathbb{S}}
\newcommand{\T}{\mathbb{T}}
\newcommand{\wrt}{w.\,r.\,t.}  
\newcommand{\ie}{i.\,e.}
\title[Classification of Maximal Arcs]{Classification of Maximal Arcs in
 Small Projective Hjelmslev Geometries}
\author{Thomas Honold}
\address{Thomas Honold\\
  Technische Universität München\\
  Zentrum Mathematik (M11)\\
  Boltz\-mann\-str.~3\\
  D-85748 Garching\\
  Germany} \email{honold@ma.tum.de}
\author{Michael Kiermaier}
\address{Michael Kiermaier\\
    Technische Universität München\\
  Zentrum Mathematik (M11)\\
  Boltz\-mann\-str.~3\\
  D-85748 Garching\\
  Germany} \email{michael.kiermaier@gmx.net}
\date{}
\begin{document}

\begin{abstract}
  Maximal arcs in small projective Hjelmslev geometries
  are classified up to isomorphism, and the parameters of the
  associated codes are determined.
\end{abstract}

\keywords{Codes over chain rings, projective Hjelmslev geometry,
  maximal arc, nauty}

\maketitle

\section{Introduction}\label{sec:intro}

Fat\footnote{A linear code is \emph{fat} if its coordinate
  projections are onto.}
linear codes over a finite chain ring $R$ may be viewed
as multisets of points in projective Hjelmslev geometries over $R$ as
developed in~\cite{it:chain}. There is evidence that, just as in the classical
case $R=\F_q$, good (from a coding theorist's point-of-view) linear
codes over $R$ correspond in general to interesting (from a geometer's
point-of-view) multisets of points in projective Hjelmslev geometries.

This paper reports on a computer search for maximal arcs in projective
Hjelmslev geometries of small dimension over chain rings of order at
most $16$. In the
cases under consideration the arcs have been classified up to
geometric equivalence. 

We refer to \cite{ivan-silvia04,it:kent,it:ovals,it:deadfin} for the
combinatorics of projective Hjelmslev geometries and in particular for
results on arcs in projective Hjelmslev planes. Some general
properties of the MDS-like codes associated with arcs can be found in
\cite{t:mdslike}.

With a finite chain ring $R$ (\emph{not} assumed to be commutative) we
associate the pair of parameters $(q,m)$, where $R/\rad
R\cong\F_q=\F_{p^r}$ and $m$ is the composition length of ${}_RR$ (or $R_R$).
Then $R$ has cardinality $q^m$ and characteristic $p^\lambda$, where
$1\leq\lambda\leq m$. Up to isomorphism there is only one ring with
$\lambda=m$, the Galois ring $\G_{q,m}=\GR(q^m,p^m)$, and one
commutative ring with $\lambda=1$, the truncated polynomial ring
$\S_{q,m}=\F_q[X]/(X^m)$. The remaining chain rings of order $\leq 16$
are $\H_8=\Z_4[X]/(X^2+2,X^3)$, the truncated skew polynomial ring
$\T_4=\F_4[X;\sigma]/(X^2)$ \wrt\ $\sigma\colon\F_4\to\F_4$, $a\mapsto
a^2$, and the rings $\I_{16}=\Z_4[X]/(X^2+2)$,
$\J_{16}=\Z_4[X]/(X^2+2X+2)$, $\K_{16}=\Z_4[X]/(X^3+2,X^4)$. These
rings have orders as indicated by the subscript.\footnote{No rule
  without exception. The second author denies any responsibility for
  the weird `$\T_4$'.} 

Denoting by $R$ one of the rings listed above, we write $\PHG(k,R)$
for the $k$-dimensional projective Hjelmslev geometry over
$R$.\footnote{Even in the case of $\T_4$ there is no need to
  distinguish between ``left'' and ``right'' geometries. The
  antiautomorphism $a_0+a_1X\mapsto a_0+\sigma(a_1)X$ of $\T_4$
  identifies $\PHG\bigl({}_{\T_4}(\T_4^l)\bigr)$ with
  $\PHG\bigl((\T_4^l)_{\T_4}\bigr)$.} The points (lines, hyperplanes)
of $\PHG(k,R)$ are the free rank $1$ (rank $2$, resp.\ rank $k$)
submodules of a free $R$-module of rank $k+1$, say $R^{k+1}_R$, and
incidence is defined by set inclusion.

Following the notation in \cite[Ch.~3.3]{hirschfeld98}, an \emph{$(n,u)$-arc}
in $\PHG(k,R)$ is an $n$-multiset of points of $\PHG(k,R)$ with at most
$u$ points (counted with their multiplicities) on any hyperplane. An
\emph{$(n,u)$-arc} is called \emph{degenerate} if its points (\ie\
those of nonzero multiplicity) generate a proper submodule of $R^{k+1}_R$. We
denote by $\nmax_u(k,R)$ the maximum $n$ for which an $(n,u)$-arc
in $\PHG(k,R)$ exists. An $(n,u)$-arc is said to be \emph{complete} if it
is not contained in an $(n+1,u)$-arc and \emph{maximal} if
$n=\nmax_u(k,R)$. 

An $n$-multiset $\mset{k}$ of points of $\PHG(k,R)$ defines an
$R$-linear code $C\leq{}_RR^n$ by taking representative vectors of the
points as columns of a generator matrix for $C$. Using a generalized
Gray map $\psi\colon R\to\F_q^{q^{m-1}}$ as in
\cite{greferath-schmidt98a}, we obtain from $C$ a (not necessarily
linear) distance-invariant $q$-ary code $\psi(C)$.\footnote{If $R$ has
  prime characteristic $p$ then $\psi(C)$ is in fact a linear code.}
The weight distribution of $\psi(C)$ can be computed from geometric
information on $\mset{k}$; see \cite[Th.~5.2]{it:chain}.

\section{The Arcs}\label{sec:arcs}

In \cite{royle98} G.F.~Royle describes an algorithm for the
classification of complete arcs in projective planes over finite
fields. The algorithm essentially applies B.D.~McKay's isomorph-free exhaustive
generation method for combinatorial structures (cf.\ \cite{mckay98}
and the software package \textsf{nauty} available at
\verb+http://cs.anu.edu.au/~bdm/nauty/+) to the point-line incidence
graph of the plane.

The second author~\cite{kiermaier06} has adapted Royle's algorithm to
the case $\PHG(k,R)$. In particular, the classification of complete arcs in
$\PHG(k,R)$ is accomplished by applying the method to
the point-hyperplane incidence graph of $\PHG(k,R)$.

Table~\ref{uebersichtTabelle} shows the main results of this computer
search. Tabulated are the numbers $\nmax_u(k,R)$ for the chain rings
of order $\leq 16$ and small $k$, $u$, along with the number of
equivalence classes of $(n,u)$-arcs, $n=\nmax_u(k,R)$. 
Unique (up to equivalence) maximal arcs are indicated
by bold type, otherwise the number of equivalence classes of
nondegenerate maximal arcs (the total number of equivalence
classes of maximal arcs) is appended as a subscript (resp.\ a
subscript in parentheses).

\begin{table}
  \setlength{\arraycolsep}{2pt}
  \begin{equation*}
\begin{array}{c|c||cc|ccc|cc|ccc|ccccc}
\multicolumn{2}{c||}{(q,m)} & \multicolumn{2}{|c|}{(2,2)} & \multicolumn{3}{|c|}{(2,3)} & \multicolumn{2}{|c|}{(3,2)} & \multicolumn{3}{|c|}{(4,2)} & \multicolumn{5}{|c}{(2,4)} \\
\hline
k & u & \mathbb{Z}_4 & \mathbb{S}_{2,2} & \mathbb{Z}_8 & \mathbb{H}_8 & \mathbb{S}_{2,3} & \mathbb{Z}_9 & \mathbb{S}_{3,2} & \mathbb{G}_{4,2} & \mathbb{S}_{4,2} & \mathbb{T}_{4} & \mathbb{Z}_{16} & \mathbb{I}_{16} & \mathbb{J}_{16} & \mathbb{K}_{16} & \mathbb{S}_{2,4}\\
\hline
\hline
\multirow{5}{*}{2} & 2 & \mathbf{7} & 6_2 & \mathbf{10} & 10_5 & 10_5 & 9_3 & 9_4 & \mathbf{21} & 18_6 & \mathbf{18} & \geq 16 & \mathbf{22} & \mathbf{22} & 19_5 & \mathbf{19}\\
& 3 & 10_8 & 10_8 & \mathbf{21} & 18_{93} & 18_{93} & 19_3 & 18_{255} & & & & &\\
& 4 & 16_3 & 16_3 & & & & & & & & & & & &\\
& 5 & \mathbf{22} & \mathbf{22} & & & & & & & & & & & &\\
& 6 & \mathbf{28} & \mathbf{28} & & & & & & & & & & & &\\
\hline
\multirow{3}{*}{3} & 3 & \mathbf{8} & 6_{1(2)} & 8_{57(68)} & \mathbf{9} & \mathbf{9} & \mathbf{10} & \mathbf{10} & & & & & & & &\\
& 4 & 10_{25} & \mathbf{11} & & & & & & & & & & & & &\\
& 5 & 16_2 & 16_2 & & & & & & & & & & & & &\\
\hline
\multirow{2}{*}{4} & 4 & 6_{5(17)} & 6_{5(17)} & & & & & & & & & & & & &\\
& 5 & 11_4 & 11_6 & & & & & & & & & & & & &\\
\end{array}
\end{equation*}
\caption{Cardinalities and number of equivalence classes of maximal
  $(n,u)$-arcs in $\PHG(k,R)$, $\abs{R}\leq 16$}
\label{uebersichtTabelle}
\end{table}

Selected maximal arcs from Table~\ref{uebersichtTabelle} are listed
in the appendix, along with data on the Gray images of the associated
$R$-linear codes.


\section{Remarks}\label{sec:rmk}

  For chain rings of length $m=2$, the numbers $\nmax_u(2,R)$ were
  known previously except for the cases
  $\nmax_2(2,\S_{2,2})=\nmax_2(2,\T_4)=18$; cf.\
  \cite{ivan-silvia04,it:kent,it:ovals,it:deadfin}.

  It has been conjectured before that for given $q$, $m$, $k$, $u$,
  the numbers $\nmax_u(k,R)$ form a non-decreasing function of the
  characteristic of $R$. This is not true in general, as the
  example of the (unique) $(11,4)$-arc in $\PHG(3,\S_{2,2})$ shows which is
  bigger than any $(n,4)$-arc in $\PHG(3,\Z_4)$.

  A $(q^m+q^{m-1}+1,2)$-arc in $\PHG(2,R)$ is also referred to as
  a \emph{hyperoval}. Through any point of $\PHG(2,R)$ there are
  $q^m+q^{m-1}$ lines, so hyperovals have no tangents.  It is known
  that hyperovals in $\PHG(2,R)$ exist iff $R=\F_{2^r}$ (a finite
  field of even order) or $R=\G_{2^r,2}$ (a Galois ring of
  characteristic $4$): The case $m=1$ is classical, $m=2$ has been
  done in \cite{it:ovals}, and for $m\geq 3$ the nonexistence of
  hyperovals in $\PHG(2,R)$ follows from the observation that any two
  points of a hyperoval are on a unique secant (so the
  number of points cannot exceed $q^2+q+1$).

  According to Table~\ref{uebersichtTabelle} hyperovals in the planes
  over $\Z_4$ and $\G_{4,2}$ are unique up to equivalence. The
  following proposition gives a bit more information on hyperovals in
  $\PHG(\Z_4)$.
  \begin{proposition}
    \label{prop:7}
    The set $\mathfrak{H}$ of hyperovals of $\PHG(2,\Z_4)$ has
    cardinality $256$. The
    automorphism group $G$ of $\PHG(2,\Z_4)$ is transitive on
    $\mathfrak{H}$ and the stabilizer $G_{\mset{h}}$ of a hyperoval
    $\mset{h}\in\mathfrak{H}$ has order $168$. Further, $G$ has a
    normal subgroup $H$ which is regular on $\mathfrak{H}$.
  \end{proposition}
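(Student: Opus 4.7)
The plan is to identify a normal subgroup $H\le G$ of order $256$, show it acts freely on $\mathfrak{H}$, and let orbit-stabilizer do the rest. Since $\Z_4$ has no nontrivial ring automorphism, $G=\PGL_3(\Z_4)$; the mod-$2$ reduction $\GL_3(\Z_4)\to\GL_3(\F_2)$ has kernel $\imat+2M_3(\Z_4)$ of order $2^9$, so $|\GL_3(\Z_4)|=2^9\cdot 168=86016$ and $|G|=86016/|\Z_4^\times|=43008=256\cdot 168$. The induced surjection $\rho\colon G\to\PGL_3(\F_2)$ has normal kernel $H$ with $|H|=256$; every element of $H$ acts trivially on the reduction $\PG(2,\F_2)$ and thus preserves each of the seven neighbor classes of $\PHG(2,\Z_4)$ setwise.

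The combinatorial heart of the argument is the lemma that no two hyperoval points are neighbors, equivalently, that every hyperoval meets each neighbor class in exactly one point. To prove it, let $N$ (resp.\ $M$) be the number of pairs of neighboring (resp.\ non-neighboring) hyperoval points, so $N+M=\binom{7}{2}=21$. In $\PHG(2,\Z_4)$, two neighboring points share exactly two common lines and two non-neighboring points share exactly one. Counting incidences between secants and contained pairs in two ways, using that a hyperoval has $21$ secants and no tangents (as already recorded in the text), yields $21=2N+M=N+21$, whence $N=0$.

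Given the lemma, the rest is bookkeeping. Any $k\in H\cap G_{\mset{h}}$ preserves each neighbor class and fixes the unique hyperoval point in it, hence fixes all seven hyperoval points individually. Their reductions exhaust $\PG(2,\F_2)$, so four of them can be chosen whose reductions form a frame of the Fano plane; the corresponding lifts form a projective frame of $\PHG(2,\Z_4)$ (any three have a coordinate matrix whose mod-$2$ reduction is invertible, hence with unit determinant over $\Z_4$), and the pointwise stabilizer of such a frame in $\PGL_3(\Z_4)$ is trivial. So $H\cap G_{\mset{h}}=1$; this gives $|G_{\mset{h}}|\le|\PGL_3(\F_2)|=168$ via $\rho$, while the free action of $H$ on $\mathfrak{H}$ gives $|\mathfrak{H}|\ge|H|=256$. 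Transitivity of $G$ on $\mathfrak{H}$ (recorded in Table~\ref{uebersichtTabelle}) and orbit-stabilizer then yield $|\mathfrak{H}|\cdot|G_{\mset{h}}|=43008=256\cdot 168$, forcing equality in both bounds; regularity of $H$ on $\mathfrak{H}$ follows from the free action together with $|H|=|\mathfrak{H}|$.

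The main obstacle will be the neighbor-class lemma, and especially the input that a pair of neighbors in $\PHG(2,\Z_4)$ lies on exactly two common lines. This is standard for chain-ring planes but deserves a brief local verification via the reduction map $\PHG(2,\Z_4)\to\PG(2,\F_2)$.
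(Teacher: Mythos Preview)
Your argument is correct, and it identifies the same normal subgroup $H=(\imat_3+2\Z_4^{3\times 3})/\{\pm\imat_3\}$ and establishes its regularity via $H\cap G_{\mset{h}}=1$ just as the paper does. The route differs in one essential respect, however: you \emph{borrow} transitivity of $G$ on $\mathfrak{H}$ from the computer classification in Table~\ref{uebersichtTabelle}, whereas the paper proves it directly. The paper's mechanism is that $\PGL(3,\Z_4)$ acts regularly on ordered quadrangles (four points in distinct neighbour classes, no three collinear in the quotient), every quadrangle extends to a \emph{unique} hyperoval (checked for the canonical quadrangle and transported by regularity on quadrangles), and a hyperoval contains exactly $168$ ordered quadrangles---as many as $\PG(2,\F_2)$; orbit--stabilizer then delivers transitivity, $|G_{\mset{h}}|=168$, and $|\mathfrak{H}|=256$ simultaneously. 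Your neighbour-class lemma (one hyperoval point per neighbour class) is precisely what underlies the paper's ``$168$ quadrangles'' count, but you make it explicit with a clean double-counting proof, which is a genuine plus. The trade-off: the paper's approach is self-contained and yields the structural bonus that every quadrangle determines a unique hyperoval, while yours is tidier on the stabilizer side but leans on the computer search for one of the proposition's own assertions. If you wanted to make your version self-contained, the missing ingredient is an independent upper bound $|\mathfrak{H}|\le 256$ (equivalently $|G_{\mset{h}}|\ge 168$), and the quadrangle-extension observation is the natural way to get it.
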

  \begin{proof}
    The group $\PGL(3,\Z_4)$ acts regularly on ordered quadrangles
    (four points in different neighbour classes, no three neighbour classes
    on a line of the quotient plane $\PG(2,\F_2)$). The 
    stabilizer in $G$ of an ordered quadrangle is easily seen to be
    trivial. Hence $G\cong\PGL(3,\Z_4)$ (an instance of the
    Fundamental Theorem of Projective Hjelmslev Geometry) and
    $\abs{G}=\abs{\PGL(3,\Z_4)}=2^9\cdot\abs{\GL(3,2)}/2=256\cdot 168$. Now
    observe that a quadrangle is contained in a unique hyperoval---for
    the canonical quadrangle $\Z_4(100)$, $\Z_4(010)$, $\Z_4(001)$,
    $\Z_4(111)$ the remaining points are $\Z_4(123)$, $\Z_4(312)$ and
    $\Z_4(231)$---and a hyperoval contains $168$ ordered
    quadrangles---as many as the quotient plane $\PG(2,\F_2)$. This
    yields all except the last assertion.  Finally,
    since each $G_{\mset{h}}\cong\GL(3,2)$ acts faithfully on the quotient
    plane, the required normal subgroup
    of $G$ is the kernel of the action of $G$ on the quotient
    plane, \ie\ $H=(\imat_3+2\Z_4^{3\times 3})/\{\pm\imat_3\}$, where
    $\imat_3$ denotes the $3\times 3$ identity matrix.
  \end{proof}

  We leave it as an exercise to prove the uniqueness of the $(8,3)$-arc of
  $\PHG(3,\Z_4)$ (which corresponds to the $\Z_4$-linear
  Nordstrom-Robinson code) along similar lines.

\appendix

\section*{Appendix}\label{sec:app}

Data on selected arcs from Table~\ref{uebersichtTabelle} is given
below. Each entry contains the arc (in homogeneous coordinates), the
order $g$ of its automorphism group, the minimum homogeneous distance
$d_\Hom$ of the associated code $C$ (which is equal to $q^{2-m}$ times
the minimum Hamming distance of the Gray image) and information on the
$q$-ary Gray image $\psi(C)$.

\begin{description}
\addtolength{\itemsep}{1ex}
\item[$(7,2)$-arc in $\PHG(2,\Z_4)$]\ \\  
  $(1:0:0),(0:1:0),(0:0:1),(1:1:1),(2:1:3),(1:2:3),(1:3:2)$\\
  $g=168$, $d_\Hom=6$\\
  $\psi$ gives a binary $[14,6,6]$-code with weight enumerator $1 + 42
  X^6 + 7 X^8 + 14 X^{10}$.  The best linear binary $[14,6]$-code has
  minimum distance $5$.
\item[$(22,5)$-arc in $\PHG(2,\Z_4)$]\ \\
  $g=1536$, $d_\Hom=20$\\
  $(1:0:0),(0:1:0),(0:0:1),(1:1:1),(0:1:1),(1:0:1),(1:1:0),\\
  (1:2:2),(2:1:2),(2:2:1),(1:3:3),(2:1:3),(1:2:3),(1:3:2),\\
  (1:2:0),(2:1:0),(2:0:1),(1:3:1),(1:0:2),(0:1:2),(0:2:1),\\
  (1:1:3)$\\
  $\psi$ gives a binary $[44,6,20]$-code with weight enumerator $1 + 6
  X^{20} + 48 X^{22} + 6 X^{24} + 2 X^{28} + X^{32}$.  There is a
  linear binary $[44,6,21]$-code.
\item[$(22,5)$-arc in $\PHG(2,\S_{2,2})$]\ \\
  $g=1536$, $d_\Hom=20$\\
  $(1:0:0),(0:1:0),(0:0:1),(1:1:1),(0:1:1),(1:0:1),(1:X:X),\\
  (X:1:X),(X:X:1),(1:X+1:X+1),(X:1:X+1),(1:X:X+1),\\
  (1:X:0),(X:1:0),(X:0:1),(1:X+1:1),(1:X+1:0),\\
  (1:0:X),(0:1:X),(0:X:1),(1:1:X+1),(1:1:X)$\\
  $\psi$ gives a linear binary code with the same parameters as in the last
  case.
\item[$(8,3)$-arc in $\PHG(3,\Z_4)$]\ \\ 
  $g=1344$, $d_\Hom=6$\\
  $(1:0:0:0),(0:1:0:0),(0:0:1:0),(0:0:0:1),(1:3:3:2),\\
  (2:1:3:3),(1:2:1:3),(1:1:2:1)$\\
  $\psi$ gives a binary $[16,8,6]$-code with weight enumerator $1 +
  112 X^6 + 30 X^8 + 112 X^{10} + X^{16}$.  The best linear binary
  $[16,8]$-code has minimum distance $5$.
\item[$(11,4)$-arc in $\PHG(3,\S_{2,2})$]\ \\
  $g=24$, $d_\Hom=8$\\
  $(1:0:0:0),(0:1:0:0),(0:0:1:0),(0:0:0:1),(1:1:1:1),\\
  (X:1:X+1:X),(1:X:X+1:X+1),(1:X:1:X),\\
  (1:X+1:0:X),(1:X+1:X+1:0),(0:1:X:X+1)$\\
  $\psi$ gives a linear binary $[22,8,8]$-code with weight enumerator $1 + 54
  X^8 + 76 X^{10} + 72 X^{12} + 48 X^{14} + X^{16} + 4 X^{18}$ which
  is optimal.
\item[$(10,2)$-arc in $\PHG(2,\Z_8)$]\ \\
  $g=8$, $d_\Hom=6$\\
  $(1:0:0),(0:1:0),(0:0:1),(1:1:1),(2:1:3),(4:1:5),(1:5:4),\\
  (6:1:2),(1:7:2),(1:3:5)$\\
  $\psi$ gives a binary $[40,9,12]$-code with weight enumerator $1 + 4
  X^{12} + 70 X^{16} + 128 X^{18} + 168 X^{20} + 32 X^{22} + 72 X^{24}
  + 32 X^{26} + 4 X^{28} + X^{32}$.  There is a linear binary
  $[40,9,16]$-code.
\item[$(21,3)$-arc in $\PHG(2,\Z_8)$]\ \\
$g=168$, $d_\Hom=18$\\
$(1:0:0),(0:1:0),(1:2:2),(2:2:1),(2:1:3),(1:2:1),(1:1:3),\\
(1:1:2),(1:5:5),(1:5:4),(1:7:6),(6:1:5),(4:1:7),(6:1:0),\\
(1:7:1),(0:4:1),(1:0:5),(2:1:6),(1:2:7),(1:0:6),(0:6:1)$\\
$\psi$ gives a binary $[84,9,36]$-code with weight enumerator $1 + 14 X^{36} + 168 X^{38} + 196 X^{42} + 42 X^{44} + 7 X^{48} + 84 X^{50}$.
There is a linear binary $[84,9,38]$-code.
\item[$(9,3)$-arc in $\PHG(3,\H_8)$]\ \\
$g=12$, $d_\Hom=5$\\
$(1:0:0:0),(0:1:0:0),(0:0:1:0),(0:0:0:1),(1:X:X+1:X),\\
(X:X:1:X+1),(1:X+3:2:X+2),(2:1:X+3:X+3),\\
(1:2:X+3:X+1)$\\
$\psi$ gives a binary $[36,12,10]$-code with weight enumerator $1 + 12 X^{10} + 166 X^{12} + 504 X^{14} + 873 X^{16} + 908 X^{18} + 1020 X^{20} + 468 X^{22} + 110 X^{24} + 24 X^{26} + 6 X^{28} + 4 X^{30}$.
There is a linear binary $[36,12,12]$-code.
\item[$(9,3)$-arc in $\PHG(3,\S_{2,3})$]\ \\
$g=12$, $d_\Hom=5$\\
$(1:0:0:0),(0:1:0:0),(0:0:1:0),(0:0:0:1),(1:X:X+1:X),\\
(X^2+X:1:X^2+X+1:X^2+X),(1:X^2+X+1:X^2:X^2+X),\\
(1:X^2:X^2+X+1:X^2+X+1),(X^2:1:X+1:X^2+X+1)$\\
$\psi$ gives a linear binary code with the same parameters as in the last case.
\item[$(10,3)$-arc in $\PHG(3,\Z_9)$]\ \\
$g=10$, $d_\Hom=15$\\
$(1:0:0:0),(0:1:0:0),(0:0:1:0),(0:0:0:1),(1:1:1:1),\\
(1:3:2:4),(1:4:3:2),(1:2:8:3),(3:1:8:2),(1:8:4:5)$\\
$\psi$ gives a ternary $[30,8,15]$-code with weight enumerator $1 + 720 X^{15} + 1680 X^{18} + 3240 X^{21} + 900 X^{24} + 20 X^{27}$.
There no better linear ternary $[30,8]$-code.
\item[$(10,3)$-arc in $\PHG(3,\S_{3,2})$]\ \\
$g=10$, $d_\Hom=15$\\
$(1:0:0:0),(0:1:0:0),(0:0:1:0),(0:0:0:1),(1:1:1:1),\\
(1:X:X+2:X+1),(1:X+2:2:2X),(1:2X+2:2X+1:X+2),\\
(1:2X+1:X:2X+2),(2X:1:X+2:2X+2)$\\
$\psi$ gives an (optimal) linear ternary code with the same parameters
as in the last case.
\item[$(21,2)$-arc in $\PHG(2,\G_{4,2})$]\ \\
  $g=126$, $d_\Hom=60$\\
  $(1:0:0),(0:1:0),(0:0:1),(1:1:1),(1:3:3a),(2:1:3a),(1:3a:2a+2),\\
  (1:a+3:2a),(1:2:a+1),(1:2a+3:2),(1:2a+2:3),(2a+2:1:3a+3),\\
  (1:a:3a+1),(1:3a+3:a+2),(2a:1:3),(1:2a+1:3a+3),(1:a+2:a),\\
  (1:3a+1:2a+3),(1:2a:3a+2),(1:a+1:a+3),(1:3a+2:2a+1)$\\
  $\psi$ gives a quaternary $[84,6,60]$-code with weight
  enumerator $1 + 2520 X^{60} + 63 X^{64} + 1512 X^{68}$.  The best
  known linear quaternary $[84,6]$-code has minimum distance $59$.
\item[$(18,2)$-arc in $\PHG(2,\T_4)$]\ \\
  $g=96$, $d_\Hom=48$\\
  $(1:0:0),(0:1:0),(0:0:1),(1:1:1),(1:X+1:X+a),(1:X+a:X),\\
  (1:X:aX+1),(1:X+(a+1):a),(aX:1:X+1),(1:aX:aX+a),\\
  (1:aX+a:(a+1)X+a),(1:aX+(a+1):(a+1)X+1),\\
  (1:(a+1)X+a:X+1),(1:(a+1)X+1:aX+(a+1)),\\
  (1:(a+1)X+(a+1):aX),((a+1)X:1:aX+a),\\
  (1:a+1:X+(a+1)),(1:a:(a+1)X+(a+1))$\\
  $\psi$ gives a linear quaternary $[72,6,48]$-code with weight
  enumerator $1 + 12 X^{48} + 864 X^{50} + 960 X^{51} + 96 X^{52} +
  576 X^{54} + 144 X^{56} + 864 X^{58} + 576 X^{59} + 3 X^{64}$.
  There is a linear quaternary $[72,6,50]$-code.
\end{description}


\end{document}